\newtheorem{theorem}{Theorem}[section]
\begin{document}

\title{A problem involving the $p$-Laplacian operator}
\author{Ratan K. Giri \& D. Choudhuri\footnote{Corresponding
author: dc.iit12@gmail.com} \\
}
\date{}
\maketitle

\begin{abstract}
\noindent Using a variational technique we guarantee the existence
of a solution to the \emph{resonant Lane-Emden} problem $-\Delta_p
u=\lambda |u|^{q-2}u$, $u|_{\partial\Omega}=0$ if and only if a
solution to $-\Delta_p u=\lambda |u|^{q-2}u+f$,
$u|_{\partial\Omega}=0$, $f \in L^{p'}(\Omega)$ ($p'$ being the
conjugate of $p$) , exists for $q\in (1,p)\bigcup (p,p^{*})$ under a
certain condition for both the cases, i.e., $1<q<p<p^{*}$ and $1< p < q < p^{*}$ - the sub-linear and the super-linear cases. \\
{\bf keywords}:~ $p$-laplacian; elliptic PDE; Palais-Smale condition; Sobolev space.\\
{\bf AMS classification}:~35A15, 35A01.
\end{abstract}

\section{Introduction}
The study of partial differential equations involving a
$p$-laplacian differential operator has become a major case of study
in the recent times although it is still far from being completely
understood, especially when $p=1$ or $\infty$. A few evidences of
the limiting case can be found in \cite{bhatta}, \cite{lind}. When
$p=2$, the usual Laplacian is obtained for which a vast literature
exists (\cite{Evans}, \cite{kesavan} and the references therein).
For $p \neq 2$ the p-Laplace operator has physical applications in
the study of non-Newtonian fluids (dilatant fluids when
$p>2$)~\cite{nikos}. In practical life most of the problems are non
linear by nature for which a numerical solution is seeked for,
however, unearthing the existence of solution leads to a rich theory
hidden behind the partial differential equation. The problems we are
going to address in this article are the following. Let $\Omega$ be
a bounded subset of $\mathbb{R}^n$, $n \geq 3$ with a Lipschitz
boundary $\partial\Omega$. Given $1 < p < \infty$ and $q\in
(1,p)\bigcup (p,p^{*})$, where $p^{*}=\frac{np}{n-p}$ if $1<p<n$ and
$p^{*}=\infty$ if $p \geq n$, we consider the following problems.
\begin{enumerate}
\item
$-\Delta_p u = \lambda|u|^{q-2}u$, $u|_{\partial\Omega}= 0$. This problem is also known as the resonant Lane-Emden problem.
\item $-\Delta_p u = \lambda|u|^{q-2}u+f$, $f  \in L^{p'}(\Omega)$, $u|_{\partial\Omega}= 0$.
\end{enumerate}
where $\lambda$ is a real number, $\Delta_{p}=\nabla\cdot(|\cdot|^{p-2}\nabla\cdot)$. Throughout this paper we shall refer the problems in 1 and 2 as the first and the second problem respectively.
\\
\noindent We call the first problem to be of sub-critical type if $1<q<p<p^{*}$ and of super-critical type when $p^{*}>q>p>1$. It is found in~\cite{Grey} that a unique solution exists to the first problem for the sub-critical case whereas uniqueness is lost for the super-critical case. Readers interested in knowing more about the first problem can refer to examples found in~\cite{Drabek},~\cite{Garcia}, where the domain is ring shaped for $q \sim p^{*}$ and the solution is non-unique. Kawohl~\cite{kawo} showed the same but the domain which was considered is of annulus type with the annulus being sufficiently small in size. Dancer~\cite{Dancer} showed that if $p=2$ and $\Omega$ is a general domain then a unique solution exists to the first problem. Uniqueness is also guaranteed in~\cite{Diaz} for the sub-linear case whereas a subdifferential method has been used to prove existence in~\cite{Otani} for both sub and super critical cases.\\
In this paper we will use a well known variational technique to show the existence of a solution in $W_0^{1,p}(\Omega)=\{v \in L^p(\Omega):\nabla v\in L^{p}(\Omega), v|_{\partial\Omega}=0\}$. A Fredholm type alternative is also proposed thus showing a connection between the first and the second problem. We organize the paper into two sections. In Section 2 we give the Mathematical formulation. In Section 3 we discuss a few preliminary results and the main result.

\section{Mathematical formulation}
The following definitions and theorems will be used in the main result we prove.\\
{\bf 2.1 Definition}:~ Let $X$ be a Banach space and $H:X \rightarrow \mathbb{R}$ a $C^{1}$ functional. It is said to satisfy the {\it Palais-Smale condition} (PS) if the following holds:\\
 Whenever $\{u_n\}$ is a sequence in $X$ such that $\{H(u_n)\}$ is bounded and $H^{'}(u_n) \rightarrow 0$ strongly in $X^{'}$ (the dual space), then $\{u_n\}$ has a strongly convergent subsequence.\\
The (PS) condition is a strong condition as very ``well-behaved" function do not satisfy it ({\it Example}: $f(x)=c$, $x \in \mathbb{R}$, $c$ a real constant).\\
We now state the following important theorem due to Ambrosetti and Rabinowitz~\cite{Ambro} which is a common tool used in the theory of modern PDEs.\\\\
{\it Mountain-pass theorem}:~Let $H:X \rightarrow \mathbb{R}$ be a $C^{1}$ functional satisfying (PS). Let $u_0$, $u_1 \in X$, $c_0 \in \mathbb{R}$ and $r > 0$ such that
\begin{enumerate}
\item $||u_1-u_0|| > r$
\item $H(u_0)$, $H(u_1) < c_0 \leq H(v)$, $\forall v$ such that $||v-u_0||=r$. Then $H$ has a critical value $c \geq c_0$ defined by
\begin{eqnarray}
    c &=& \inf\limits_{\Gamma \in \wp}\max\limits_{t \in [0,1]}H(\Gamma(t))
\end{eqnarray}
where $\wp$ is the collection of all continuous paths $\Gamma: [0,1] \rightarrow X$ such that $\Gamma(0)=u_0$, $\Gamma(1)=u_1$.
\end{enumerate}
{\bf 2.2 Weak formulation of the problem}:~We now give the weak formulation of the first problem. We say that $u \in W_0^{1,p}(\Omega)$ is a weak solution of the first problem if
\begin{eqnarray}
\int_{\Omega}|\nabla u|^{p-2}\nabla u\cdot\nabla vdx -\lambda\int_{\Omega}|u|^{q-2}uv dx &=& 0
\end{eqnarray}
for every $v \in W_0^{1,p}(\Omega)$.\\
The weak solutions of the Lane-Emden problem are the critical points of the energy function defined by
\begin{eqnarray}
J_{q}(u)&=&\frac{1}{p}\int_{\Omega}|\nabla u|^{p}dx-\frac{\lambda}{q}\int_{\Omega}|u|^{q}dx.
\end{eqnarray}
The following compact embedding theorems, due to Rellich-Kondrasov have been used in our work.
\begin{eqnarray}
\text{if}~p < n, W_0^{1,p}(\Omega) \hookrightarrow L^{q}(\Omega),~1 \leq q < p^{*},\nonumber\\
\text{if}~p=n, W_0^{1,n}(\Omega) \hookrightarrow L^{q}(\Omega),~1 \leq q < \infty,\nonumber\\
\text{if}~p>n, W_0^{1,p}(\Omega) \hookrightarrow C(\overline{\Omega}).~~~~~~~~~~~~~~~~~~\nonumber
\end{eqnarray}
We consider the non-homogeneous counterpart of the first problem - which is the second problem - and is as follows.
\begin{eqnarray}
-\Delta_p u &=& \lambda|u|^{q-2}u+f, \nonumber\\
u|_{\partial\Omega}&=& 0,
\end{eqnarray}
where $f \in L^{p^{'}}(\Omega)$, $p^{'}$ being the conjugate of $p$
and is equal to $\frac{p}{p-1}$. Let the corresponding functional be
denoted by $J$ which is defined as follows.
\begin{eqnarray}
J(u) &=& \frac{1}{p}\int_{\Omega}|\nabla u|^{p}dx-\frac{\lambda}{q}\int_{\Omega}|u|^qdx -\int_{\Omega}fudx.
\end{eqnarray}
The Fr\'{e}chet derivative of $J$, which is in
$W_0^{-1,p^{'}}(\Omega)$ where $p^{'}=\frac{p}{p-1}$, is
\begin{eqnarray}
<J^{'}(u),v> &=& \int_{\Omega}|\nabla u|^{p-2}\nabla u\cdot\nabla vdx -\lambda\int_{\Omega}|u|^{q-2}uvdx -\int_{\Omega}fvdx,
\end{eqnarray}
$\forall v \in W_0^{1,p}(\Omega)$. Thus $u \in W_0^{1,p}(\Omega)$ is a weak solution of the second problem if
\begin{eqnarray}
\int_{\Omega}|\nabla u|^{p-2}\nabla u\cdot\nabla vdx -\lambda\int_{\Omega}|u|^{q-2}uvdx -\int_{\Omega}fvdx &=&0. \nonumber
\end{eqnarray}
For the sake of further analysis we redefine the functional as follows.
\begin{eqnarray}
J_q(u)&=& -\chi_{(1,p)}(q)J(u)+\chi_{(p,p^{*})}(q)J(u),
\end{eqnarray}
where $\chi$ is the indicator function. From the sections which follow we shall use the functional in (7).
\section{Few preliminary results and the main theorem} The main result of this paper is as follows. The problem
$-\Delta_{p}u = \lambda|u|^{q-2}u$, $u|_{\partial\Omega}= 0$
has a weak solution if and only if the problem $-\Delta_{p}u=\lambda|u|^{q-2}u+f$, $u|_{\partial\Omega}=0$, where $f \in L^{p/p-1}(\Omega)$, has a weak solution. We prove the result for $p<n$. The case of $p \geq n$ follows the same proof as in the case $p<n$ which is based on the results on compact embedding stated after equation (3). But first we present a few technical lemmas on which the proof of this result relies upon.\\
We first assume that a solution exists to the problem
\begin{eqnarray}
-\Delta_{p}u &=& \lambda|u|^{q-2}u,\nonumber\\
u|_{\partial\Omega}&=&0.
\end{eqnarray}

\begin{theorem}
The mapping $J_{q}$ defined in (7) is a $C^1$-functional over
$W_0^{1,p}(\Omega)$.
\end{theorem}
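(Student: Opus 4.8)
Since $\chi_{(1,p)}$ and $\chi_{(p,p^*)}$ are constants once $q$ is fixed, the functional $J_q$ in (7) equals $\pm J$, so it suffices to prove that $J$ itself is $C^1$ on $W_0^{1,p}(\Omega)$. The plan is to split $J = A - B - L$, where
\[
A(u) = \frac{1}{p}\int_\Omega |\nabla u|^p\,dx, \qquad
B(u) = \frac{\lambda}{q}\int_\Omega |u|^q\,dx, \qquad
L(u) = \int_\Omega f u\,dx,
\]
and treat the three pieces separately. The functional $L$ is linear, and by H\"older's inequality together with the continuous embedding $W_0^{1,p}(\Omega)\hookrightarrow L^p(\Omega)$ it is bounded, hence $C^\infty$; in particular $C^1$ with $\langle L'(u),v\rangle = \int_\Omega fv\,dx$.

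For $A$, I would first establish G\^ateaux differentiability: for fixed $u,v$, the pointwise derivative of $t\mapsto \tfrac1p|\nabla u + t\nabla v|^p$ is $|\nabla u + t\nabla v|^{p-2}(\nabla u + t\nabla v)\cdot\nabla v$, and the difference quotients are dominated, via the mean value theorem, by $C(|\nabla u| + |\nabla v|)^{p-1}|\nabla v|$, which lies in $L^1(\Omega)$ by H\"older since $(p-1)p' = p$. Dominated convergence then yields $\langle A'(u),v\rangle = \int_\Omega |\nabla u|^{p-2}\nabla u\cdot\nabla v\,dx$, matching (6). To upgrade to $C^1$ I would show $A'$ is continuous from $W_0^{1,p}(\Omega)$ to $W^{-1,p'}(\Omega)$: if $u_n\to u$ in $W_0^{1,p}(\Omega)$ then $\nabla u_n\to\nabla u$ in $L^p(\Omega;\mathbb R^n)$, and the Nemytskii operator $\xi\mapsto|\xi|^{p-2}\xi$ maps $L^p$ continuously into $L^{p'}$ (the growth is exactly of order $p-1$, and $(p-1)p'=p$); a final application of H\"older gives $\|A'(u_n)-A'(u)\|_{W^{-1,p'}}\to 0$.

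For $B$, the argument is parallel but uses the embedding after equation (3). G\^ateaux differentiability with $\langle B'(u),v\rangle = \lambda\int_\Omega |u|^{q-2}uv\,dx$ follows from dominated convergence, the majorant $C(|u|+|v|)^{q-1}|v|$ being integrable because $(q-1)q'=q$ and $W_0^{1,p}(\Omega)\hookrightarrow L^q(\Omega)$ for every $q\in(1,p)\cup(p,p^*)$ when $p<n$ (and for all $q<\infty$, resp.\ into $C(\overline\Omega)$, when $p\ge n$). For continuity of $B'$: if $u_n\to u$ in $W_0^{1,p}(\Omega)$ then $u_n\to u$ in $L^q(\Omega)$, so $|u_n|^{q-2}u_n\to|u|^{q-2}u$ in $L^{q'}(\Omega)$ by continuity of the corresponding Nemytskii operator, and then for any $v$, $|\langle B'(u_n)-B'(u),v\rangle|\le |\lambda|\,\||u_n|^{q-2}u_n-|u|^{q-2}u\|_{q'}\,\|v\|_q \le C\,\||u_n|^{q-2}u_n-|u|^{q-2}u\|_{q'}\,\|v\|_{W_0^{1,p}}$, which forces $\|B'(u_n)-B'(u)\|_{W^{-1,p'}}\to0$. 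Combining the three pieces, $J$ has a G\^ateaux derivative that is continuous on $W_0^{1,p}(\Omega)$, and a standard result then promotes this to Fr\'echet differentiability, giving $J\in C^1$, hence $J_q\in C^1$.

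The routine parts are the dominated-convergence computations of the G\^ateaux derivatives; the step that deserves the most care, and which I regard as the crux, is the continuity of the derivative operators $A'$ and $B'$ — specifically the continuity of the Nemytskii maps $\xi\mapsto|\xi|^{p-2}\xi$ on $L^p\to L^{p'}$ and $s\mapsto|s|^{q-2}s$ on $L^q\to L^{q'}$, and verifying that the exponent bookkeeping $(p-1)p'=p$, $(q-1)q'=q$ together with the Rellich--Kondrachov embeddings covers uniformly both ranges $1<q<p$ and $p<q<p^*$.
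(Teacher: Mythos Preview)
Your proposal is correct and follows essentially the same approach as the paper: reduce $J_q$ to $\pm J$ and show that the derivative (6) is well-defined and continuous via H\"older's inequality together with the Sobolev embeddings listed after (3). The paper's own proof is much terser --- it only records the bound $|\langle J'(u),v\rangle|\le C(u,f)\,\|\nabla v\|_p$ and then asserts $J\in C^1$ --- so your explicit verification of G\^ateaux differentiability and of the continuity of the Nemytskii operators $\xi\mapsto|\xi|^{p-2}\xi$ and $s\mapsto|s|^{q-2}s$ fills in precisely the steps the paper leaves implicit.
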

\begin{proof}
We first prove that the functional $J'$ is continuous which will
imply that $J_q'$ is continuous and hence the theorem will follow.
Consider
\begin{eqnarray}
|<J^{'}(u),v>| & \leq & \int_{\Omega}|\nabla u|^{p-2}\nabla u\cdot\nabla vdx+|\lambda|\int_{\Omega}|u|^{q-1}vdx+\int_{\Omega}|f||v|dx\nonumber\\
& \leq &  ||\nabla u||_{\frac{p}{p-1}}||\nabla v||_{p}+|\lambda|||u||_{\frac{q}{q-1}}||v||_{q}+||f||_{\frac{p}{p-1}}||v||_{p}\nonumber\\
&\leq & \left[||\nabla u||_{\frac{p}{p-1}}+C_1|\lambda|||u||_{\frac{q}{q-1}}+C_2||f||_{\frac{p}{p-1}}\right]||\nabla v||_{p},~\forall v \in W_0^{1,p}(\Omega),
\end{eqnarray}
where $C_1$, $C_2$ are the constants due to the embedding of $W_0^{1,p}(\Omega)$ in $L^{q}(\Omega)$ for $q\in [1,p^{*}]$.
From (8)\&(9) one can see that $J$ is a $C^1$ functional over $W_0^{1,p}(\Omega)$.
\end{proof}
\begin{theorem}
There exists $u_0, u_1 \in W_0^{1,p}(\Omega)$ and a positive real
number $c_0$ such that $J_q(u_0),J_q(u_1)<c_0$ and $J_q(v)\geq c_0$,
for every $v$ satisfying $||v-u_0||_{1,p}=r$.
 \end{theorem}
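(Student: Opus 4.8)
The statement is exactly the geometric hypothesis of the mountain--pass theorem for the functional $J_q$ of (7), so my plan is to fix the base point $u_0=0$ (whence $J_q(u_0)=0$), to produce a radius $r>0$ and a constant $c_0>0$ with $J_q(v)\ge c_0$ whenever $\|v-u_0\|_{1,p}=r$, and then to exhibit a point $u_1$ with $\|u_1-u_0\|_{1,p}>r$ and $J_q(u_1)<c_0$. Throughout I would use Poincar\'e's inequality, by which $\|\nabla\,\cdot\,\|_p$ is an equivalent norm on $W_0^{1,p}(\Omega)$, together with the Rellich--Kondrashov embeddings recalled after (3); I would also take $\lambda>0$, which is the regime in which $J_q$ fails to be coercive and the mountain--pass setting is the natural one. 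The two ranges of $q$ must be handled separately, because of the sign that (7) attaches to $J$.

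For the super-linear range $p<q<p^{*}$ one has $J_q=J$, and for $\|v\|_{1,p}=r$, H\"older's inequality and the embeddings $W_0^{1,p}(\Omega)\hookrightarrow L^{q}(\Omega)$, $W_0^{1,p}(\Omega)\hookrightarrow L^{p}(\Omega)$ (embedding constants $C_1,C_2$) give
\[
J_q(v)=\frac1p\|\nabla v\|_p^{p}-\frac{\lambda}{q}\int_{\Omega}|v|^{q}\,dx-\int_{\Omega}fv\,dx\;\ge\;\frac1p r^{p}-\frac{\lambda C_1^{q}}{q}\,r^{q}-C_2\|f\|_{p'}\,r .
\]
Since $q>p$, the function $\psi(r)=\tfrac1p r^{p-1}-\tfrac{\lambda C_1^{q}}{q}r^{q-1}$ is positive on a maximal interval $(0,\bar r)$ and attains there a positive maximum $M=\psi(r_1)$; under the smallness hypothesis $\|f\|_{p'}<M/C_2$ (this is the ``certain condition'' announced in the abstract), taking $r:=r_1$ and $c_0:=r_1\bigl(M-C_2\|f\|_{p'}\bigr)>0$ makes $J_q(v)\ge c_0$ on $\{\|v\|_{1,p}=r_1\}$, while $J_q(0)=0<c_0$. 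For $u_1$ I would fix $\phi\in W_0^{1,p}(\Omega)$ with $\phi\ge0$, $\phi\not\equiv0$, $\|\phi\|_{1,p}=1$; since $q>p$ and $\lambda>0$,
\[
J_q(t\phi)=\frac{t^{p}}{p}-\frac{\lambda t^{q}}{q}\int_{\Omega}|\phi|^{q}\,dx-t\int_{\Omega}f\phi\,dx\longrightarrow-\infty\qquad(t\to+\infty),
\]
so for $t$ large enough $u_1:=t\phi$ satisfies $\|u_1-u_0\|_{1,p}=t>r_1$ and $J_q(u_1)<0<c_0$.

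For the sub-linear range $1<q<p$ the sign in (7) is reversed, $J_q=-J$, so that $J_q(v)=-\tfrac1p\|\nabla v\|_p^{p}+\tfrac{\lambda}{q}\int_{\Omega}|v|^{q}\,dx+\int_{\Omega}fv\,dx$; here $u_1=t\phi$ with $t$ large still works, now because $J_q(t\phi)\to-\infty$ as $t\to+\infty$ owing to $p>q>1$, and one attempts the sphere estimate around $u_0=0$ under the same smallness hypothesis on $\|f\|_{p'}$. This last step is the one I expect to fight with, and it is the real obstacle of the whole statement: the strictly positive lower bound $J_q(v)\ge c_0>0$ on the sphere is already delicate when $f\not\equiv0$, since $\int_{\Omega}fv\,dx$ is only linear in $v$ and cannot be absorbed by the $r^{p}$ term as $r\to0^{+}$---which is precisely why a smallness condition on $\|f\|_{p'}$, calibrated against $\lambda$ and the embedding constants, is forced---and in the sub-linear range it is aggravated by the fact that $\int_{\Omega}|v|^{q}\,dx$ is not bounded below on $\{\|v\|_{1,p}=r\}$, so that securing $c_0>0$ there needs a more careful choice of $u_0$ and $r$ than in the super-linear case.
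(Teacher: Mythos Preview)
Your treatment of the super-linear range $p<q<p^{*}$ is essentially the paper's: set $u_0=0$, bound $J_q$ from below on the sphere $\|v\|_{1,p}=r$ by $r\bigl(\tfrac{1}{p}r^{p-1}-\tfrac{\lambda}{q}C\,r^{q-1}\bigr)-C'\|f\|_{p'}\,r$, optimize in $r$, and impose a smallness condition so that the maximum of the bracket exceeds the $f$-term; then send $t\to\infty$ along a fixed direction to produce $u_1$. The only cosmetic differences are that the paper formulates the smallness condition as an upper bound on $\lambda$ (depending on $\|f\|_{p'}$) rather than on $\|f\|_{p'}$ (depending on $\lambda$), and takes the direction for $u_1$ to be the first $p$-Laplace eigenfunction $w_p$ rather than a generic nonnegative $\phi$; neither choice changes anything.

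Your hesitation in the sub-linear range $1<q<p$ is well founded and in fact locates a genuine defect in the paper's own argument. With $J_q=-J$ and $\|w\|_{1,p}=1$ one has
\[
J_q(rw)=-\frac{r^{p}}{p}+\frac{\lambda r^{q}}{q}\int_{\Omega}|w|^{q}\,dx+r\int_{\Omega}fw\,dx,
\]
and a uniform estimate $J_q(rw)\ge c_0>0$ over all such $w$ would require $\inf_{\|w\|_{1,p}=1}\int_{\Omega}|w|^{q}\,dx>0$. That infimum is zero: choose $w_n$ with $\|w_n\|_{1,p}=1$ and $w_n\rightharpoonup 0$ in $W_0^{1,p}(\Omega)$; by the compact embedding $\int_{\Omega}|w_n|^{q}\,dx\to 0$. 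The paper's displayed lower bound for this case replaces $\int_{\Omega}|w|^{q}\,dx$ by its Sobolev \emph{upper} bound $c_2$ and $\int_{\Omega}fw\,dx$ by $+c_1^{1/p}\|f\|_{p'}$, both of which run the wrong way; the asserted inequality $J_q(rw)\ge c''>0$ therefore does not follow. So the obstacle you anticipate is real, the paper does not overcome it, and the mountain-pass geometry about $u_0=0$ is not actually established in the sub-linear case by either argument.
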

 \begin{proof}
 Let $u_0=0$. Clearly $u_0$ is a solution of (8) and $J_q(0)=0$. Now let $w \in B(0,1)$ in $W_0^{1,p}(\Omega)$ and consider $v=u_0+rw$ for $r > 0$ and
 hence $||v-u_0||_{1,p} = r$. We first show the existence of $r$ such that $||v-u_0||_{1,p}=r_0$ and for which $J(v)\geq c_0$ for each $v\in B(0,r_0)$.\\
 Let $p<q<p^{*}$. Now
\begin{eqnarray}
J_q(u_0+r w)-J_q(u_0) &=& \frac{r^{p}}{p}\int_{\Omega}|\nabla w|^pdx-\frac{r^{q}\lambda}{q}\int_{\Omega}|w|^qdx-r\int_{\Omega}fwdx,\nonumber\\
&=& \frac{r^{p}}{p}-\frac{r^{q}\lambda}{q}\int_{\Omega}|w|^qdx-r\int_{\Omega}fwdx.
\end{eqnarray}
Further, $|w|_{1,p}=1$ and hence $|\int_{\Omega}w^{p}dx| \leq \int_{\Omega}|w^{p}|dx \leq c||w||^{p}_{p} \leq c_1|w|_{1,p}=c_1$. Similarly, $|\int_{\Omega}w^{q}dx| \leq c_2$. Using these arguments leads to
\begin{eqnarray}
J_q(u_0+r w)-J_q(u_0) & \geq &  r\left[\frac{r^{p-1}}{p}-\frac{r^{q-1} \lambda}{q}c_2-c_1^{1/p}||f||_{p'}\right],\nonumber\\
& = & c'.
\end{eqnarray}
We first analyze the term $\left[\frac{r^{p-1}}{p}-\frac{r^{q-1}
\lambda}{q}c_2-c_1^{1/p}||f||_{p'}\right]=F(r)$ (say). Clearly
$F(0)<0$ and for $r_0=\left(\frac{q(p-1)}{p(q-1)}\frac{1}{\lambda
c_2}\right)^{\frac{1}{q-p}}$ we see that $F'(r_0)=0$. A bit of
calculus guarantees that $F''(r_0)<0$ and hence $r_0$ is a maximizer
of $F$. If
$0<\lambda<\lambda_1=\frac{q(p-1)}{p(q-1)}.\left(\frac{p(q-1)}{q-p}.c_1^{\frac{1}{p}}||f||_{p'}
\right)^{\frac{1}{1-p}}$ then $F(r_0)>0$. As $r\rightarrow\infty$ we
have $F(r)\rightarrow -\infty$. Hence there exists $r_1, r_2>0$ and
$r_1<r_0<r_2$ such that $F(r)>0$ for each $r\in (r_1,r_2)$. We
choose $r=r_0$ such that $||v-u_0||_{1,p}=r_0$ and for which
$J_q(v)\geq c'$ for each $v\in B(0,r_0)$. \noindent Similarly, if
$1<q<p$ then according to the definition of $J_q$ we now have
\begin{eqnarray}
J_q(u_0+r w)-J_q(u_0) & = & -J(u_0+rw)+J(u_0)\nonumber\\
 & \geq &r\left[-\frac{r^{p-1}}{p}+\frac{r^{q-1} \lambda}{q}c_2+c_1^{1/p}||f||_{p'}\right],\nonumber\\
& = & c^{''}.
\end{eqnarray}
Using the same argument as for the case of $p<q<p^{*}$ we find $r$
and $0<\lambda<\lambda_2$ such that $J_q(v)\geq c^{''}$ for all
$||v-u_0||=r$. We choose $\lambda'=\min\{\lambda_1, \lambda_2\}$
such that $0<\lambda<\lambda'$ and $c_0=\min\{c^{'}, c^{''}\}$.
\\
{\it Choice of $u_1$}:~Let $w_p$ be the first eigen vector of $-\Delta_{p}$, i.e., $-\Delta_{p}w_p=\lambda_p |w_p|^{p-1}w_p$, where $\lambda_p$ is the first eigen value of $-\Delta_{p}$. The first eigen value of the $p$-laplacian operator is strictly positive \cite{ly}. Consider the function $g=k w_p$, $k\in\mathbb{R}$, $||w_p||_{1,p}=1$ and $p<q<p^{*}$. Note that,
\begin{eqnarray}
J_q(g) & = & \left(\frac{k^p}{p}-\frac{\lambda k^q \int_{\Omega}|w_p|^{q}dx}{q}\right)-kC,\nonumber
\end{eqnarray}
where $C=\int_{\Omega}fw_pdx$. Since $p<q<p^{*}$, we observe $k$ can
be chosen arbitrarily large so that
$\displaystyle{\frac{k_0^p}{p}-\frac{\lambda
k_0^q\int_{\Omega}|w_p|^{q}dx}{q}- k_0C< 0}$.
 Then $J_q(kw_p) < 0$ and hence $J_q(kw_p)<J_q(u_0)$. Thus we can choose $u_1=k_0w_p$, where $k_0>r_0$. Then $||u_1-u_0||_{1, p}>r_0$. Similarly for $1<q<p$ we have
 \begin{eqnarray}
J_q(g) & = & \left(-\frac{k^p}{p}+\frac{\lambda k^q \int_{\Omega}|w_p|^{q}dx}{q}\right)+kC,\nonumber
\end{eqnarray}
and $k$ can be chosen large enough to make $J_q(g)<0$. Hence the result.
 \end{proof}
\begin{theorem}
$J_q$ satisfies the Palais-Smale condition.
\end{theorem}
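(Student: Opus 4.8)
The plan is to establish the (PS) condition in the two classical stages: first, show that every (PS) sequence is bounded in $W_0^{1,p}(\Omega)$, and second, extract a weakly convergent subsequence and promote weak convergence to strong convergence using the monotonicity structure of $-\Delta_p$ together with the compact embeddings recorded after (3).

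\textbf{Boundedness of a (PS) sequence.} Let $\{u_n\}\subset W_0^{1,p}(\Omega)$ satisfy $|J_q(u_n)|\le M$ and $J_q'(u_n)\to 0$ in $W_0^{-1,p'}(\Omega)$, so that $|\langle J_q'(u_n),u_n\rangle|\le \vp_n\|u_n\|_{1,p}$ with $\vp_n\to 0$, and likewise $|\langle J'(u_n),u_n\rangle|\le \vp_n\|u_n\|_{1,p}$ since $J_q'=\pm J'$. In the super-linear range $p<q<p^{*}$ we have $J_q=J$, and the Ambrosetti--Rabinowitz device gives
\begin{eqnarray}
J(u_n)-\frac{1}{q}\langle J'(u_n),u_n\rangle &=& \left(\frac{1}{p}-\frac{1}{q}\right)\|\nabla u_n\|_p^p-\left(1-\frac{1}{q}\right)\int_\Omega f u_n\,dx, \nonumber
\end{eqnarray}
the $\lambda$-term cancelling by the choice of the factor $\tfrac1q$. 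Bounding the left-hand side by $M+\tfrac{1}{q}\vp_n\|u_n\|_{1,p}$ and using $\left|\int_\Omega f u_n\,dx\right|\le \|f\|_{p'}\|u_n\|_p\le C\|f\|_{p'}\|u_n\|_{1,p}$ (Poincaré), we get $\left(\tfrac{1}{p}-\tfrac{1}{q}\right)\|\nabla u_n\|_p^p\le M+C'\|u_n\|_{1,p}$; since $\tfrac1p-\tfrac1q>0$ and $p>1$, $\{u_n\}$ is bounded in $W_0^{1,p}(\Omega)$. In the sub-linear range $1<q<p$ we have $J_q=-J$, and boundedness follows from the functional value alone: $\tfrac1p\|\nabla u_n\|_p^p=J(u_n)+\tfrac{\lambda}{q}\int_\Omega|u_n|^q\,dx+\int_\Omega f u_n\,dx\le M+C\|u_n\|_{1,p}^{q}+C\|u_n\|_{1,p}$ by the embedding $W_0^{1,p}(\Omega)\hookrightarrow L^q(\Omega)$, and $q<p$ forces $\{u_n\}$ bounded.

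\textbf{From weak to strong convergence.} Since $W_0^{1,p}(\Omega)$ is reflexive, a subsequence (still denoted $\{u_n\}$) satisfies $u_n\rightharpoonup u$ in $W_0^{1,p}(\Omega)$, and by Rellich--Kondrachov $u_n\to u$ strongly in $L^q(\Omega)$ and in $L^p(\Omega)$. From $\langle J_q'(u_n),u_n-u\rangle=\pm\langle J'(u_n),u_n-u\rangle\to 0$ we obtain
\begin{eqnarray}
\int_\Omega |\nabla u_n|^{p-2}\nabla u_n\cdot\nabla(u_n-u)\,dx &=& \pm\langle J_q'(u_n),u_n-u\rangle+\lambda\int_\Omega|u_n|^{q-2}u_n(u_n-u)\,dx+\int_\Omega f(u_n-u)\,dx, \nonumber
\end{eqnarray}
where the last two integrals tend to $0$ because $\{|u_n|^{q-2}u_n\}$ is bounded in $L^{q'}(\Omega)$ with $u_n-u\to 0$ in $L^q(\Omega)$, and $f\in L^{p'}(\Omega)$ with $u_n-u\to 0$ in $L^p(\Omega)$. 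Hence $\int_\Omega |\nabla u_n|^{p-2}\nabla u_n\cdot\nabla(u_n-u)\,dx\to 0$; since also $\int_\Omega |\nabla u|^{p-2}\nabla u\cdot\nabla(u_n-u)\,dx\to 0$ by weak convergence of $\nabla u_n$ in $L^p(\Omega)$, subtracting yields
\begin{eqnarray}
\int_\Omega \left(|\nabla u_n|^{p-2}\nabla u_n-|\nabla u|^{p-2}\nabla u\right)\cdot\nabla(u_n-u)\,dx &\to& 0. \nonumber
\end{eqnarray}
Applying the standard monotonicity inequalities for $\xi\mapsto|\xi|^{p-2}\xi$ --- namely $(|\xi|^{p-2}\xi-|\eta|^{p-2}\eta)\cdot(\xi-\eta)\ge c_p|\xi-\eta|^p$ when $p\ge 2$, and $(|\xi|^{p-2}\xi-|\eta|^{p-2}\eta)\cdot(\xi-\eta)\ge c_p|\xi-\eta|^2(|\xi|+|\eta|)^{p-2}$ when $1<p<2$ (the latter followed by H\"older and the bound on $\|\nabla u_n\|_p$) --- we conclude $\|\nabla(u_n-u)\|_p\to 0$, i.e. $u_n\to u$ strongly in $W_0^{1,p}(\Omega)$. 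Equivalently, this is the $(S_+)$ property of $-\Delta_p$.

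\textbf{Main obstacle.} The delicate step is the boundedness of the (PS) sequence in the super-linear case: because of the inhomogeneous term $\int f u$ the estimate is not purely variational as for the Lane--Emden functional, and one must play the super-linear growth $\|\nabla u_n\|_p^p$ off against both the linear term $\int f u_n$ (absorbed via Poincaré) and the error $\vp_n\|u_n\|_{1,p}$. The passage to strong convergence is then routine modulo the case distinction $p\ge 2$ versus $1<p<2$ in the monotonicity estimate.
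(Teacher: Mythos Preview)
Your proof is correct. The boundedness step is essentially the paper's: the paper combines $qJ_q(u_m)$ with $\langle J_q'(u_m),u_m\rangle$ to eliminate the $\lambda$-term and arrives at $\frac{p-q}{p}\|u_m\|_{1,p}^{p}=\langle J_q'(u_m),u_m\rangle-qJ_q(u_m)-(q-1)\int_\Omega fu_m\,dx$, which is your identity $J-\tfrac{1}{q}\langle J',\cdot\rangle$ multiplied by $-q$. Your separate treatment of the sub-linear case (using only the functional value and the embedding, exploiting $q<p$) is slightly more direct than the paper, which simply says that case ``follows similarly''.

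The genuine difference is in the passage from weak to strong convergence. The paper argues via uniform convexity: it invokes an Appendix to claim that $|\nabla u_{n_k}|^{p-2}\nabla u_{n_k}\rightharpoonup |\nabla u|^{p-2}\nabla u$ in $L^{p'}$, uses this to identify $u$ as a weak solution, and then evaluates $\langle J_q'(u_{n_k}),u_{n_k}\rangle$ to deduce $\|u_{n_k}\|_{1,p}\to\|u\|_{1,p}$, whence strong convergence. You instead test $J_q'(u_n)$ against $u_n-u$, kill the lower-order terms with Rellich--Kondrachov, and apply the vector inequalities for $\xi\mapsto|\xi|^{p-2}\xi$, i.e.\ the $(S_+)$ property of $-\Delta_p$. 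Your route is the standard one and is self-contained; the paper's route buys the identification of the limit as a weak solution along the way but leans on an Appendix argument (weak continuity of $u\mapsto|\nabla u|^{p-2}\nabla u$) that is delicate as written.
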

\begin{proof}
Let us consider the case for which $p<q<p^*$. The other case for
$1<q<p$ follows similarly. Let $u_n$ be a sequence in
$W_0^{1,p}(\Omega)$ such that $|J_q(u_n)|\leq M$ and
$J_q^{'}(u_n)\rightarrow 0$ as $n\rightarrow\infty$ in
$W_0^{-1,p^{'}}(\Omega)$, $p'$ being the conjugate of $p$. Now
\begin{eqnarray}
J_q(u_n) &=& \frac{1}{p}\int_{\Omega}|\nabla u_n|^{p}dx-\frac{\lambda}{q}\int_{\Omega}|u_n|^{q}dx -\int_{\Omega}fu_ndx,\\
<J_q^{'}(u_n),v> &=& \int_{\Omega}|\nabla u_n|^{p-2}\nabla u_n.\nabla vdx -\lambda\int_{\Omega}|u_n|^{q-2}u_n vdx -\int_{\Omega}fvdx, \forall v \in W_0^{1,p}(\Omega).\nonumber\\
\end{eqnarray}
Consider the following.
\begin{eqnarray}
<J_q^{'}(u_m),u_m> &=& \int_{\Omega}|\nabla u_m|^pdx-\lambda\int_{\Omega}|u_m|^{q}dx-\int_{\Omega}fu_mdx,\\
J_q(u_m)&=& \frac{1}{p}\int_{\Omega}|\nabla u_m|^{p}dx-\frac{\lambda}{q}\int_{\Omega}|u_m|^{q}dx-\int_{\Omega} fu_mdx,\nonumber\\
&=& \frac{1}{p}|u_m|_{1,p}^{p}-\frac{\lambda}{q}\int_{\Omega}|u_m|^qdx-\int_{\Omega}fu_mdx\nonumber\\
\lambda\int_{\Omega}|u_m|^{q}dx &=& \frac{q}{p}|u_m|_{1,p}^{p}-qJ_q(u_m)-q\int_{\Omega}fu_mdx,\nonumber\\
\frac{p-q}{p} |u_m|_{1,p}^{p} &=&
<J_q^{'}(u_m),u_m>-qJ_q(u_m)-q\int_{\Omega}fu_mdx.
\end{eqnarray}
This implies that $|u_m|_{1,p}$ is bounded.
The above inequality in (14) clearly shows that $u_n$ is bounded in $W_0^{1,p}(\Omega)$ and hence by Eberlein-\v{S}mulian's theorem (refer Dunford-Schwartz [1; p. 430]~\cite{dunford}) it has a {\it weakly} convergent subsequence, say $u_{n_{k}}$, in $W_0^{1,p}(\Omega)$.\\
\noindent{\bf Claim}.The sequence $\{u_{n_{k}}\}$ is strongly convergent in $W_0^{1,p}(\Omega)$.\\
\noindent{\bf Proof}.~Applying limit $k\rightarrow\infty$ to (14)
(refer Appendix) and using the strong convergence of $(u_{n_{k}})$
in $L^q(\Omega)$ due to compact embedding we obtain
\begin{eqnarray}
\int_{\Omega} |\nabla u|^{p-2}\nabla u\cdot\nabla vdx &=& \lambda\int_{\Omega} |u|^{q-2}uvdx+\int_{\Omega}fvdx,
\end{eqnarray}
and we pass on the limit to (15) we get
\begin{eqnarray}
\lim\limits_{n\rightarrow\infty}|u_{n_{k}}|_{1,p}^{p} = \lambda\int_{\Omega}|u|^{q}dx+\int_{\Omega}fudx = |u|_{1,p}^{p}.
\end{eqnarray}
Thus a weakly convergent sequence which is convergent in norm is strongly convergent. Hence $u_{n_{k}}\rightarrow u$ in $W_0^{1,p}(\Omega)$ as $k\rightarrow\infty$.
\end{proof}
\noindent Thus  by the Mountain-pass theorem an extreme point for $H$ exists in $W_0^{1,p}(\Omega)$ \newline
We summarize the results proved in Theorems $3.1$, $3.2$ and $3.3$ in the form of a theorem as follows.
\begin{theorem}
Suppose $-\Delta_p u = \lambda|u|^{q-2}u$, $u|_{\partial\Omega}= 0$ has a solution. Then
\begin{enumerate}
\item the functional $J_q=-\chi_{(1,p)}(q)J(u)+\chi_{(p,p^{*})}(q)J(u)$ where $J(u)=\frac{1}{p}\int_{\Omega}|\nabla u|^{p}dx-\frac{\lambda}{q}\int_{\Omega}|u|^qdx -\int_{\Omega}fudx$ is $C^1$ and satisfies the Palais-Smale condition,
\item $J_q$ satisfies the hypothesis of the Mountain-Pass theorem.
\end{enumerate}
Therefore $J_q$ has an extreme point in $W_0^{1,p}(\Omega)$. In
other words $-\Delta_p u = \lambda|u|^{q-2}u+f$, $\,f \in
L^{p'}(\Omega)$, $u|_{\partial\Omega}= 0$ has a solution whenever
$\lambda\in(0,\lambda']$ where $\lambda'=\min\{\lambda_1,
\lambda_2\}$ as found in Theorem 3.2.


\end{theorem}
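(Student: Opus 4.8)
The plan is to collect Theorems 3.1, 3.2 and 3.3 and feed the resulting data into the Mountain-pass theorem recalled in Section 2. First I would observe that Theorem 3.1 gives $J_q\in C^1(W_0^{1,p}(\Omega);\mathbb{R})$ and Theorem 3.3 gives that $J_q$ satisfies the Palais--Smale condition; together these are item (1). Next, Theorem 3.2 produces $u_0=0$, $u_1=k_0 w_p$ with $k_0>r_0$, a radius $r=r_0$, and a level $c_0=\min\{c',c''\}>0$ such that $\|u_1-u_0\|_{1,p}>r_0$ and $J_q(u_0),J_q(u_1)<c_0\le J_q(v)$ for all $v$ with $\|v-u_0\|_{1,p}=r_0$; this is exactly hypothesis (2) of the Mountain-pass theorem, which is item (2). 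I would stress at this point that the whole construction is conditional on $\lambda\in(0,\lambda']$, $\lambda'=\min\{\lambda_1,\lambda_2\}$, because that is the range for which $F(r_0)>0$ in the proof of Theorem 3.2, and that the standing hypothesis that the first problem has a solution enters only through the admissible choice $u_0=0$ in that proof.

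With (1) and (2) established, the Mountain-pass theorem furnishes a critical value $c\ge c_0>0$ of $J_q$, hence a point $u^{\ast}\in W_0^{1,p}(\Omega)$ with $J_q'(u^{\ast})=0$. It then remains to read this off as a statement about the second problem. By the definition (7), $J_q=\pm J$ according as $q\in(1,p)$ or $q\in(p,p^{\ast})$, so $J_q'(u^{\ast})=0$ is equivalent to $J'(u^{\ast})=0$, i.e.
\begin{eqnarray}
\int_{\Omega}|\nabla u^{\ast}|^{p-2}\nabla u^{\ast}\cdot\nabla v\,dx-\lambda\int_{\Omega}|u^{\ast}|^{q-2}u^{\ast}v\,dx-\int_{\Omega}fv\,dx=0,\quad \forall v\in W_0^{1,p}(\Omega),\nonumber
\end{eqnarray}
which is precisely the weak formulation of $-\Delta_p u=\lambda|u|^{q-2}u+f$, $u|_{\partial\Omega}=0$. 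Since $c>0=J_q(0)$, the solution $u^{\ast}$ is in addition nontrivial. For $p\ge n$ nothing changes except that the compact embedding invoked in Theorems 3.1--3.3 is the relevant one from the list after (3) (namely $W_0^{1,n}(\Omega)\hookrightarrow L^q(\Omega)$ for every finite $q$, or $W_0^{1,p}(\Omega)\hookrightarrow C(\overline{\Omega})$ for $p>n$), and every estimate carries over verbatim.

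Since this is a synthesis of results already proved, there is no genuinely hard step; the points needing care are two. First, one should check that neither the $C^1$ property nor the Palais--Smale verification imposes any constraint on $\lambda$ beyond $\lambda\in(0,\lambda']$, so that $\lambda'$ really is the binding bound appearing in the conclusion. Second, one must make sure the sign bookkeeping in (7) does not damage the Mountain-pass geometry: it does not, because Theorem 3.2 was already phrased directly for $J_q$ (and its two branches $-J$ and $+J$ were treated separately there), so $u_0$, $u_1$, $r_0$ and $c_0$ are valid for $J_q$ as it stands.
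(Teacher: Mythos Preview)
Your proposal is correct and follows exactly the paper's approach: the paper presents Theorem 3.4 purely as a summary of Theorems 3.1--3.3, noting just before it that ``by the Mountain-pass theorem an extreme point for $H$ exists in $W_0^{1,p}(\Omega)$,'' without a separate proof environment. Your write-up is in fact more detailed than the paper's own treatment, spelling out the translation from $J_q'(u^\ast)=0$ to the weak formulation via the sign convention in (7), the nontriviality from $c>0$, and the $p\ge n$ remark --- all consistent with what the paper does or intends.
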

\noindent Conversely, suppose a solution to the problem
\begin{eqnarray}
-\Delta_p u&=&\lambda|u|^{q-2}u+f,~f \in L^{p'}(\Omega),\nonumber\\
u|_{\partial\Omega}&=&0.
\end{eqnarray}
We subdivide this situation into two different cases - namely, $1<q<p$ (the sub-linear case) and $1<p<q<p^{*}$ (the super-linear cases).
Let $(f_n)\subset L^{p^{'}}(\Omega)$ be a sequence such that $f_n\rightarrow 0$ in $L^{p'}(\Omega)$. By the assumption, to each $f_n$ there exists a solution,
say $u_n$.\\
We have {\bf $q\in (1,p)\bigcup(p,p^{*})$} and
\begin{eqnarray}
B[u,v]&=&\int_{\Omega}|\nabla u|^{p-2}\nabla u\cdot\nabla vdx-\lambda\int_{\Omega}|u|^{q-2}uvdx,\nonumber\\
&=&\int_{\Omega} fvdx,\,\,\forall v\in W_0^{1,p}(\Omega)
\end{eqnarray}
where $B$ is a `{\it non linear form}' in two variables $u$ and $v$. It is easy to check that $B(.,.)$ is the Fr\'{e}chet derivative of the $C^1$ functional $\frac{1}{p}\int_{\Omega}|\nabla u|^p-\frac{\lambda}{q}\int_{\Omega}|u|^q$ and hence is continuous.\\
Clearly, for each $ v\in W_0^{1,p}(\Omega)$ we have
\begin{eqnarray}
B[u_n,v]&=& \int_{\Omega}|\nabla u_n|^{p-2}\nabla u_n\cdot\nabla vdx-\lambda\int_{\Omega}|u_n|^{q-2}u_nvdx  ,\nonumber\\
&=&\int_{\Omega}f_nvdx,\nonumber\\
& \leq & ||f_n||_{p^{'}}||v||_{p} \rightarrow 0\,\,\mbox{as}\,\, n\rightarrow \infty.
\end{eqnarray}
Hence $\int_{\Omega}f_nvdx\rightarrow 0$ as $n\rightarrow\infty$. 
Consider $T_n(v)= \displaystyle{\int_{\Omega}|\nabla u_n|^{p-2}\nabla u_n\cdot\nabla vdx}$. Then $T_n$ is bounded linear over $W_0^{1,p}(\Omega)$ and $||T_n||= |||\nabla u_n|^{p-1}||_{p'}$.
From the above definition of $T_n$, for a fixed $v\in W_0^{1,p}(\Omega)$ we have the sequence $(T_n(v))$ to be bounded which implies that $(T_n(v))$ is pointwise bounded. Thus by the uniform boundedness principle $(||T_n||)$ is bounded.
Thus $||\nabla u_n||_{p}$ is bounded. Hence, there exists
a subsequence $(u_{n_k})$ which weakly converges to $u_{\infty}$
with respect to the $||\cdot||_{1,p}$ in $W_0^{1,p}(\Omega)$. Hence
we have
\begin{eqnarray}
\lim_{k \rightarrow \infty}\int_{\Omega}|\nabla v|^{p-2}\nabla v\cdot\nabla u_{n_{k}}dx &=& \int_{\Omega}|\nabla v|^{p-2}\nabla v \cdot\nabla u_{\infty}dx, \forall v \in W_0^{1,p}(\Omega).\nonumber\\
\Rightarrow\lim_{k \rightarrow \infty}\int_{\Omega}|\nabla
u_{n_{l}}|^{p-2}\nabla u_{n_{l}}\cdot\nabla u_{n_{k}}dx &=&
\int_{\Omega}|\nabla u_{n_l}|^{p-2}\nabla u_{n_{l}}\cdot\nabla
u_{\infty}dx,
\end{eqnarray}
for a fixed $l$.
Therefore, since $u_{n_{k}}\rightharpoonup u_{\infty}$ in $W_0^{1,p}(\Omega)$ implies that $|\nabla u_{n_{k}}|^{p-1}\rightharpoonup |\nabla u_{\infty}|^{p-1}$ (for a subsequence) in $L^{p'}(\Omega)$ (Refer Appendix). But $W_0^{1,p}(\Omega) \hookrightarrow L^{p}(\Omega)  \hookrightarrow W^{-1,p^{'}}(\Omega)$ and hence
\begin{eqnarray}
\lim_{l \rightarrow \infty}\int_{\Omega}|\nabla u_{n_{l}}|^{p-2}\nabla u_{n_{l}}\cdot\nabla vdx &=& \int_{\Omega}|\nabla u_{\infty}|^{p-2}\nabla u_{\infty}\cdot\nabla vdx, \forall v \in W_0^{1,p}(\Omega),\nonumber\\
\Rightarrow\lim_{l \rightarrow \infty}\int_{\Omega}|\nabla
u_{n_{l}}|^{p-2}\nabla u_{n_{l}}\cdot\nabla u_{\infty}dx &=&
\int_{\Omega}|\nabla u_{\infty}|^{p}dx .
\end{eqnarray}
Hence, $\lim_{k \rightarrow \infty}\int_{\Omega}|\nabla u_{n_{k}}|^{p}dx=\int_{\Omega}|\nabla u_{\infty}|^{p}dx$.
It immediately can be concluded that there exists a $u_{\infty}$ such that $u_n \rightarrow u_{\infty}$ in $W_0^{1,p}(\Omega)$. Hence using the continuity of $B[.,.]$ in (20) we have
\begin{eqnarray}
\lim_{n\rightarrow\infty}B[u_n,v]&=&\lim_{n\rightarrow\infty}\int_{\Omega}|\nabla u_n|^{p-2}\nabla u_n\cdot\nabla vdx-\lim_{n\rightarrow\infty}\lambda\int_{\Omega}|u_n|^{q-2}u_nvdx,\nonumber\\
&=&\lim_{n\rightarrow\infty}\int_{\Omega} f_nvdx,\nonumber\\
\Rightarrow B[u_{\infty},v]&=& 0, \forall v \in W_0^{1,p}(\Omega).\nonumber
\end{eqnarray}
In other words
\begin{eqnarray}
\int_{\Omega}|\nabla u_{\infty}|^{p-2}\nabla u_{\infty}\cdot\nabla vdx-\lambda\int_{\Omega}|u_{\infty}|^{q-2}u_{\infty}vdx&=& 0, \forall v \in W_0^{1,p}(\Omega).
\end{eqnarray}
\noindent We summarize the result proved as follows.
\begin{theorem}
Suppose $-\Delta_p u = \lambda|u|^{q-2}u+f$, $f\in L^{p'}(\Omega)$,
$u|_{\partial\Omega}= 0$ has a solution. If $u_n$ is a solution of the PDE corresponds to $f_n$, where $f_n \subset
L^{p'}(\Omega)$ such that $f_n \rightarrow 0$ in $L^{p'}(\Omega)$,  then  we have $B[u_{\infty},v]=0$ for each $v\in
W_0^{1,p}(\Omega)$ and thus $u_{\infty}$ is a solution to $-\Delta_p
u = \lambda|u|^{q-2}u$, $u|_{\partial\Omega}= 0$.
\end{theorem}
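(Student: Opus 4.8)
The plan is to reduce the converse to a compactness argument for the family $\{u_n\}$ of weak solutions attached to a null sequence $f_n\to 0$ in $L^{p'}(\Omega)$, followed by passage to the limit in the weak formulation $B[u_n,v]=\int_\Omega f_nv\,dx$. First I would show that $\{u_n\}$ is bounded in $W_0^{1,p}(\Omega)$. Testing this identity with $v=u_n$ gives $\|\nabla u_n\|_p^p=\lambda\int_\Omega|u_n|^q\,dx+\int_\Omega f_nu_n\,dx$; in the sub-linear range $1<q<p$ the embedding $W_0^{1,p}(\Omega)\hookrightarrow L^q(\Omega)$ and Hölder's inequality turn this into an estimate of the form $\|\nabla u_n\|_p^{p-1}\le C_1\|\nabla u_n\|_p^{q-1}+C_2\|f_n\|_{p'}$, and since $q-1<p-1$ this forces $\sup_n\|u_n\|_{1,p}<\infty$. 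For both ranges of $q$ at once one may instead argue as in the paragraph preceding the statement: the maps $T_n(v)=\int_\Omega|\nabla u_n|^{p-2}\nabla u_n\cdot\nabla v\,dx$ are bounded linear functionals on $W_0^{1,p}(\Omega)$ with $\|T_n\|=\||\nabla u_n|^{p-1}\|_{p'}$, they are pointwise bounded in $n$, and the uniform boundedness principle yields $\sup_n\|\nabla u_n\|_p<\infty$.

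By reflexivity of $W_0^{1,p}(\Omega)$ (Eberlein--\v{S}mulian) I would extract a subsequence $u_{n_k}\rightharpoonup u_\infty$ in $W_0^{1,p}(\Omega)$, and by the Rellich--Kondrachov compact embedding $W_0^{1,p}(\Omega)\hookrightarrow L^q(\Omega)$ (valid since $q<p^*$) this subsequence converges strongly to $u_\infty$ in $L^q(\Omega)$ and in $L^p(\Omega)$. The decisive step is to upgrade this to strong convergence in $W_0^{1,p}(\Omega)$. Here I would use that $-\Delta_p$ is an operator of type $(S_+)$: it suffices to check $\limsup_k\langle -\Delta_p u_{n_k},u_{n_k}-u_\infty\rangle\le 0$. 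From the weak equation for $u_{n_k}$,
\[
\langle -\Delta_p u_{n_k},\,u_{n_k}-u_\infty\rangle=\lambda\int_\Omega|u_{n_k}|^{q-2}u_{n_k}(u_{n_k}-u_\infty)\,dx+\int_\Omega f_{n_k}(u_{n_k}-u_\infty)\,dx,
\]
and the right-hand side tends to $0$: the first integral because $\{u_{n_k}\}$ is bounded in $L^q(\Omega)$ while $u_{n_k}\to u_\infty$ in $L^q(\Omega)$ (Hölder with exponents $q/(q-1)$ and $q$), the second because $\|f_{n_k}\|_{p'}\to 0$ and $\{u_{n_k}-u_\infty\}$ is bounded in $L^p(\Omega)$. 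Hence $u_{n_k}\to u_\infty$ strongly in $W_0^{1,p}(\Omega)$; equivalently, as in the excerpt, one may first prove $\|\nabla u_{n_k}\|_p\to\|\nabla u_\infty\|_p$ and then invoke the Radon--Riesz (Kadec--Klee) property of the uniformly convex space $W_0^{1,p}(\Omega)$.

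Finally I would pass to the limit in $B[u_{n_k},v]=\int_\Omega f_{n_k}v\,dx$. Strong convergence $u_{n_k}\to u_\infty$ in $W_0^{1,p}(\Omega)$ makes $|\nabla u_{n_k}|^{p-2}\nabla u_{n_k}\to|\nabla u_\infty|^{p-2}\nabla u_\infty$ in $L^{p'}(\Omega)$ and $|u_{n_k}|^{q-2}u_{n_k}\to|u_\infty|^{q-2}u_\infty$ in $L^{q/(q-1)}(\Omega)$, by continuity of the associated Nemytskii operators; therefore for each fixed $v\in W_0^{1,p}(\Omega)$ the left-hand side converges to $B[u_\infty,v]$ while the right-hand side is at most $\|f_{n_k}\|_{p'}\|v\|_p\to 0$. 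Thus $B[u_\infty,v]=0$ for every $v\in W_0^{1,p}(\Omega)$, which is precisely the weak formulation of $-\Delta_p u=\lambda|u|^{q-2}u$, $u|_{\partial\Omega}=0$, so $u_\infty$ is a weak solution of the first problem, as claimed. The main obstacle is the second step: the a priori bound on $\{u_n\}$ is transparent when $1<q<p$ but delicate when $p<q<p^*$, since testing with $u_n$ gives the nonlinear term the \emph{wrong sign}, and the passage from weak to strong convergence must genuinely exploit the monotonicity and $(S_+)$-structure of $-\Delta_p$ rather than weak convergence of $\{u_{n_k}\}$ alone.
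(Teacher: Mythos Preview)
Your proposal is correct and follows the same skeleton as the paper: obtain an a~priori bound on $\{u_n\}$ via the uniform boundedness principle applied to the functionals $T_n(v)=\int_\Omega|\nabla u_n|^{p-2}\nabla u_n\cdot\nabla v\,dx$, extract a weakly convergent subsequence, upgrade to strong convergence in $W_0^{1,p}(\Omega)$, and then pass to the limit in $B[u_n,v]=\int_\Omega f_nv\,dx$ using continuity of $B$. The substantive difference is in the upgrade step. The paper does not invoke the $(S_+)$ property; instead it argues (via its Appendix) that weak convergence $u_{n_k}\rightharpoonup u_\infty$ in $W_0^{1,p}(\Omega)$ forces, after passing to a further subsequence, $|\nabla u_{n_k}|^{p-1}\rightharpoonup|\nabla u_\infty|^{p-1}$ in $L^{p'}(\Omega)$, and then chains limits in $k$ and $l$ through $\int_\Omega|\nabla u_{n_l}|^{p-2}\nabla u_{n_l}\cdot\nabla u_{n_k}\,dx$ to deduce $\|\nabla u_{n_k}\|_p\to\|\nabla u_\infty\|_p$, whence strong convergence by uniform convexity. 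Your route through $(S_+)$---test the equation with $u_{n_k}-u_\infty$ and use compactness in $L^q$ plus $f_{n_k}\to0$ to kill the right-hand side---is the standard modern device and is both shorter and more robust: it uses the PDE directly and sidesteps the Appendix argument, which as written leans on identifying weak limits of nonlinear quantities from weak convergence of $\nabla u_n$ alone. You correctly flag that the genuinely delicate point in either approach is the a~priori bound in the super-linear regime $p<q<p^*$, where testing with $u_n$ does not immediately close.
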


\section{Appendix}
We show that
\begin{eqnarray}
\lim\limits_{n\rightarrow\infty}\int_{\Omega}|\nabla
u_n|^{p-2}\nabla u_n\cdot\nabla vdx &=& \int_{\Omega}|\nabla
u|^{p-2}\nabla u\cdot\nabla vdx,~~ \forall v\in W_0^{1,p}(\Omega).
\end{eqnarray}
We divide the explanation into two cases:\\
\underline{Case 1}:~When $p>2$.\\
This implies that $p'$, the conjugate of $p$, should be lesser than $2$, i.e., $1<p'<2<p$. Thus we have $W_0^{1,p}(\Omega)\hookrightarrow_{compact} L^{p'}(\Omega)$ (since $W_0^{1,p}(\Omega)\hookrightarrow_{compact} L^{q}(\Omega)$ for $q\in [1,p^{*})$).\\
Since $\nabla u_n$ converges weakly to, say $\nabla u$, in $L^{p}(\Omega)$, hence $<|\nabla u_n|-|\nabla u|,v>\rightarrow 0$ for each $v\in L^{p'}(\Omega)$.
Thus $<|\nabla u_n|-|\nabla u|,|\nabla u_n|-|\nabla u|>\rightarrow 0$, i.e., $||\nabla u_n||_2\rightarrow ||\nabla u||_2$.
Hence $||\nabla u_n||_{p'}\rightarrow ||\nabla u||_{p'}$ because $p'<2<p$. By the Riesz-Fischer theorem~\cite{bachman}, there exists a subsequence of $\nabla u_n$
which converges pointwise a.e., i.e., $|\nabla u_n(x)|\rightarrow |\nabla u(x)|$. So $|\nabla u_n(x)|^{p-1}\rightarrow |\nabla u(x)|^{p-1}$ and
hence $|\nabla u_n|^{p-1}\rightharpoonup |\nabla u|^{p-1}$ in $L^{p'}(\Omega)$. Thus we have $\lim_{n \rightarrow \infty}\int_{\Omega}|\nabla u_n|^{p-2}\nabla u_n\cdot\nabla vdx=\int_{\Omega}|\nabla u|^{p-2}\nabla u\cdot\nabla vdx$, $\forall v \in W_{0}^{1,p}(\Omega)$.\\
\underline{Case 2}:~When $p<2$.\\
This implies that $p'$, the conjugate of $p$, should be greater than $2$, i.e., $p<2<p'$.\\
\noindent Look at the map $F:W_0^{1,p}(\Omega)\rightarrow L^{p'}(\Omega)$ defined by $u \mapsto |\nabla u|^{p-1}$. Consider the range of $F$, i.e., $R(F)=\{|\nabla u|^{p-1}:u\in W_0^{1,p}(\Omega)\}$.\\
Observe that the map $F$ is bounded in the sense that bounded sets are mapped to bounded sets. Hence if $u_n \rightharpoonup u$ in $W_0^{1,p}(\Omega)$ implies that $(u_n)$ is bounded in $W_0^{1,p}(\Omega)$. Hence $(F(u_n))=(|\nabla u_n|^{p-1})$ is bounded in $L^{p'}(\Omega)$. Since $L^{p'}(\Omega)$ is reflexive, hence there exists a subsequence of $|\nabla u_n|^{p-1}$ which weakly converges to, say, $w$ in $L^{p'}(\Omega)$.


\noindent We have the following.\\
$u_n\rightharpoonup u$ in $W_0^{1,p}(\Omega)$ so $|\nabla u_n|^{p-1}\rightharpoonup w$ in $L^{p'}(\Omega)$. This implies that
\begin{eqnarray}
<|\nabla u_n|^{p-1}-w,v>& \rightarrow & 0, \forall v\in L^{p}(\Omega)\nonumber
\end{eqnarray}
Since $p<2<p'$ hence $|\nabla u_n|^{p-1}-w\in L^{p}(\Omega)$. Thus $|||\nabla u_n|^{p-1}-w||_{2}\rightarrow 0$ and hence $|||\nabla u_n|^{p-1}-w||_p\rightarrow 0$. Therefore we have a subsequence of $(|\nabla u_n|^{p-1})$ such that $|\nabla u_{n}|^{p-1}\rightarrow w$ pointwise a.e. (implying $|\nabla u_{n}|\rightarrow w^{\frac{1}{p-1}}$ pointwise a.e.) and so $|\nabla u_{n}| \rightharpoonup w^{\frac{1}{p-1}}$ in $L^p(\Omega)$. Hence $w=|\nabla u|^{p-1}$.\\
Thus in all the above cases we found the following.
\begin{eqnarray}
\lim\limits_{n\rightarrow\infty}\int_{\Omega}|\nabla
u_n|^{p-2}\nabla u_n\cdot\nabla vdx &=& \int_{\Omega}|\nabla
u|^{p-2}\nabla u\cdot\nabla vdx,~~ \forall v\in W_0^{1,p}(\Omega).
\end{eqnarray}
Hence by the compact embedding due to Rellich-Kondrachov it can be concluded $u_n\rightarrow u$ in $L^q(\Omega)$. Thus we also have
\begin{eqnarray}
\lim\limits_{n\rightarrow\infty}\int_{\Omega}|u_n|^{q-2}u_n\cdot\nabla
vdx &=& \int_{\Omega}|u|^{q-2} u\cdot\nabla vdx,~~ \forall v\in
W_0^{1,p}(\Omega).
\end{eqnarray}

\section{Conclusions}
The resonant Lane-Emden problem has been studied. An existence result has been established to the non-homogeneous Lane-Emden problem for the sub-linear - $1<q<p<p^{*}$ and the super-linear case - $1<p<q<p^{*}$ for $\lambda\in (0,\lambda']$ - $\lambda'$ being sufficiently large - if it is assumed that a non-trivial solution exists to the homogeneous Lane-Emden problem for the sub-linear - $1<q<p<p^{*}$ and the super-linear case - $1<p<q<p^{*}$, which is basically an eigen value problem. We further proved the `{\it converse}' that if the non-homogeneous problem has a solution then a solution to the homogeneous problem exists for both the sub and the super critical cases.

\section{Acknowledgement}
 One of the authors (RKG) thanks the financial assistantship received from the Ministry of Human Resource Development (M.H.R.D.).

{\sc Ratan Kumar Giri} and {\sc D. Choudhuri}\\
Department of Mathematics,\\
National Institute of Technology Rourkela, Rourkela - 769008,
India

\end{document}